\numberwithin{equation}{subsection}
\newcommand{\PSL}{\operatorname{PSL}}
\newcommand{\Aut}{\operatorname{Aut}}
\newcommand{\Inn}{\operatorname{Inn}}
\newcommand{\Out}{\operatorname{Out}}
\newcommand{\trivgp}{\operatorname{\langle e \rangle}}
\newcommand{\abs}[1]{\left| #1 \right|}
\newcommand{\norm}[1]{\left \Vert #1 \right \Vert}
\DeclareMathOperator{\Stab}{Stab}
\DeclareMathOperator{\Fix}{Fix}
\DeclareMathOperator{\Env}{Env}
\DeclareMathOperator{\Prob}{Prob}
\DeclareMathOperator{\IRS}{IRS}
\DeclareMathOperator{\Sub}{Sub}
\DeclareMathOperator{\Alt}{Alt}
\DeclareMathOperator{\Mod}{Mod}
\DeclareMathOperator{\Core }{Core}
\newcommand{\N}{\mathbb N}
\newcommand{\Z}{\mathbb Z}
\newcommand{\cB}{\mathcal{B}}
\def\acts{\curvearrowright}
\newcommand\subsetsim{\mathrel{%
\ooalign{\raise0.2ex\hbox{$\subset$}\cr\hidewidth\raise-0.8ex\hbox{\scalebox{0.9}{$\sim$}}\hidewidth\cr}}}
\newtheorem{theorem}{Theorem}[section]
\newtheorem{proposition}[theorem]{Proposition}
\newtheorem{lemma}[theorem]{Lemma}
\newtheorem{question}{Question}
\theoremstyle{definition}
\newtheorem{definition}[theorem]{Definition}
\newtheorem{remark}[theorem]{Remark}
\newtheorem{example}[theorem]{Example}
\theoremstyle{remark}
\newlist{enumthm}{enumerate}{1} 
\setlist[enumthm]{label=\upshape(\alph*),ref=\upshape\thetheorem(\alph*)}
\newlist{enumprop}{enumerate}{1} 
\setlist[enumprop]{label=\upshape(\alph*),ref=\upshape\theproposition(\alph*)}
\newlist{enumlem}{enumerate}{1} 
\setlist[enumlem]{label=\upshape(\alph*),ref=\upshape\thelemma(\alph*)}
\newlist{enumcor}{enumerate}{1} 
\setlist[enumcor]{label=\upshape(\alph*),ref=\upshape\thecorollary(\alph*)}
\newlist{enumdef}{enumerate}{1} 
\setlist[enumdef]{label=\upshape(\alph*),ref=\upshape\thedefinition(\alph*)}
\title[Faithful invariant random subgroups]{Faithful invariant random subgroups in acylindrically hyperbolic groups}
\author{Yair Glasner} 
\address{Yair Glasner, Ben Gurion University of the Negev.
	Departement of Mathematics.
	Be'er Sheva, 8410501, Israel.
}
\author{Anton Hase} 
\address{Anton Hase, Ben Gurion University of the Negev.
	Departement of Mathematics.
	Be'er Sheva, 8410501, Israel.
}
\date{\today}
\begin{document}

\maketitle

\begin{abstract}
Building on work from Sun and Kechris-Quorning, we prove that every acylindrically hyperbolic group $G$ admits a weakly mixing probability measure preserving action $G \curvearrowright (X,\cB,\mu)$ which is faithful but not essentially free. 
In other words, $G$ admits a weakly mixing nontrivial faithful IRS. We also prove that every non-elementary hyperbolic group admits a characteristic random subgroup with the same properties.
\end{abstract}

\section{Introduction}\label{sec:intro}
Invariant random subgroups (IRSs) is a generalization of normal subgroups, that attracts more and more interest since they were introduced in \cite{Abert_Glasner_Virag_2014}. While normal subgroups are trivial examples of IRSs, they play a nontrivial role in the theory. Every IRS $\mu$ is intimately connected to two normal subgroups. Inscribed in $\mu$ lies its \emph{kernel} $\ker(\mu)$, the maximal normal subgroup that is almost surely contained in the IRS. The \emph{normal closure} $\langle\mu\rangle$ circumscribes the IRS; it is the minimal normal subgroup that almost surely contains the IRS.
The leading question of our research is the relation between those normal subgroups. Which groups $G$ admit IRSs that have 'small' kernels, but 'big' normal closures? To be more precise if ${\ker(\mu)=\trivgp}$ we say that $\mu$ is \emph{faithful}, if ${\langle \mu \rangle = \trivgp}$ we say that $\mu$ is \emph{trivial} and if ${\langle \mu \rangle = G}$ we call $\mu$ \emph{spanning}. So we can ask: 

\begin{question}\label{q:1}
Which (countable) groups admit ergodic nontrivial faithful IRSs? Can these IRSs even be spanning? 
\end{question} If we do not demand ergodicity, the problem vanishes: A convex combination of a nontrivial IRS with the trivial IRS $\delta_{\trivgp}$ is nontrivial and faithful. 

At first glance a promising area to look for interesting IRSs are groups which have only few normal subgroups. Every IRS in a simple group $G$, that is not $\delta_{\trivgp}$ or $\delta_G$, is faithful and spanning. This applies for example to the nontrivial IRSs in the finitary alternating group $\Alt^{\infty}$ constructed by Vershik in \cite{Ver:tnf}.  However, perhaps surprisingly, this method does not go very far. 
A sequence of deep works shows that groups that are deficient in normal subgroups also tend to admit very few IRSs. A center-free lattice in a higher rank simple Lie group, for example, is just infinite by Margulis' normal subgroup theorem. The Nevo-Stuck-Zimmer theorem \cite{SZ, NZ} guarantees in this case that every nontrivial IRS too is of finite index and hence cannot be faithful. A similar result holds for lattices in Lie groups over local fields \cite{Levit:NZ}. The group $\PSL_n(K)$ for a countable field $K$ is simple, and here the work of Peterson and Thom \cite{PT:char_rig} shows that $G=\PSL_n(K)$ admits no other IRSs than $\delta_{\trivgp}$ or $\delta_G$. A similar situation holds for Thompson's groups $F$, $V$ and $T$ by the work of Le Boudec and Matte Bon \cite{BM:Thompson}. These results are probably the most striking manifestations of the deep relation between IRSs and normal subgroups alluded to earlier.

On the other hand stand groups that are rich in normal subgroups, such as Gromov hyperbolic groups or, more generally, acylindrically hyperbolic groups. These are often rich in IRSs, beyond those coming from the normal subgroups themselves. For example \cite{Bowen_2015}, \cite{Eisenmann_Glasner_2016}, \cite{Bowen_Grigorchuk_Kravchenko_2015}, \cite{Hartman_Yadin_2018}, \cite{Joseph_2021} and many other papers construct interesting IRSs in free groups, lamplighter groups and surface groups. It can be easily verified that many of these IRSs are faithful. In free and surface groups, many of them are also spanning. The work of Bowen-Grigorchuk-Kravchenko  \cite{Bowen_Grigorchuk_Kravchenko_2017} addresses a much larger class of groups, including all acylindrically hyperbolic groups. For this they construct characteristic random subgroups inside the countably generated free group $F_{\infty}$. These are IRSs whose law is invariant under all automorphisms, and as such, they give rise to an IRS inside any group that contains $F_{\infty}$ as a normal subgroup. These IRSs are even weakly mixing. However they are very far from being faithful.

To overcome this problem, we will use a different method of (co)-inducing IRSs from subgroups to the ambient group. This method was introduced and applied to many groups by Kechris-Quorning \cite{Kechris_Quorning_2019}. Building on their work and results of \cite{Sun_2020} on acylindrically hyperbolic group, we prove the following:
\begin{restatable*}{theorem}{acylin}\label{thm:acylin}
Every acylindrically hyperbolic group admits a weakly mixing nontrivial faithful IRS.
\end{restatable*}
Kechris and Quorning use the same method in order to construct a faithful characteristic IRS inside the free group $F_2$. We generalize this result too in the following: 
\begin{restatable*}{theorem}{charirs}\label{thm:charirs} 
Every non-elementary hyperbolic group admits a weakly mixing nontrivial faithful characteristic random subgroup.
\end{restatable*}

\section{Background on IRSs}\label{sec:background}
\subsection{Kernel and normal closure of an IRS}
Even though our main theorems deal only with countable groups, we develop the terminology in a slightly more general setting. Thus, let $G$ be a locally compact second countable group and let $\Sub(G)$ be the space of closed subgroups of $G$ equipped with the Chabauty topology. The Chabauty topology is generated by the sets \begin{align*}
O_1(U)&\coloneqq\{H\in \Sub(G)\mid H\cap U\neq\emptyset\},\\
O_2(K)&\coloneqq\{H\in \Sub(G)\mid H\cap K=\emptyset\},
\end{align*} for $U\subset G$ open and $K\subset G$ compact. When the group $G$ is a discrete countable group, the Chabauty topology coincides with the topology induced on $\Sub(G)$ from the natural Tychonoff topology on $2^G$. $\Sub(G)$ endowed with the Chabauty topology is a compact metrizable space, and we will often refer without further notice to some metric compatible with the topology. In particular, both $G$ and $\Sub(G)$ are hereditarily Lindel\"of. Namely, every open cover of a subspace admits a countable subcover. We will use this property several times. 

\begin{definition}\label{def:IRS}
An \emph{invariant random subgroup (IRS)} is a $G$-invariant Borel probability measure on $\Sub(G)$. We denote the set of IRSs on $G$ by $\IRS(G)$.
\end{definition}

Every IRS $\mu\in \IRS(G)$ is induced by a probability measure preserving action $G\acts(\Omega,\nu)$ via the stabilizer map, i.e., $\mu=\Stab_*\nu$ (see Proposition 13 in \cite{Abert_Glasner_Virag_2014} and Theorem 2.6 in \cite{7samurai_2017}). Even though we are only interested in IRSs, we formulate many statements in terms of probability measures on $\Sub(G)$. This does not complicate the proofs and allows us to talk comfortably about IRSs on subgroups.

\subsubsection*{The kernel of an IRS}
For any subset $F\subset G$, the \emph{envelope} of $F$ is defined as the collection of all subgroups containing $F$, \[\Env F\coloneqq\{H\in \Sub(G)\mid F\subseteq H\}.\] For a singleton $g\in G$, we write $\Env g$ instead of $\Env  \{g\}$. Note that $\Env F=\bigcap_{g\in F} \Env g=\bigcap_{g\in F} \Sub(G)\setminus O_2(\{g\})$ is closed.

\begin{definition}\label{def:kernel}
Let $\mu\in \Prob(\Sub(G))$. The \emph{kernel} of $\mu$ is \begin{equation*}
\ker(\mu)\coloneqq \max_{H\in \Sub(G)} \{H\mid \mu(\Env H)=1\}.
\end{equation*}
\end{definition}

\begin{lemma}\label{lem:kernel}
For $\mu \in \Prob(\Sub(G))$, the kernel $\ker(\mu)$ exists and is unique. Moreover $\ker(\mu)=\{g\in G\mid \mu(\Env g)=1\}.$
\end{lemma}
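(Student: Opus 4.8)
The plan is to define the candidate set $N \coloneqq \{g \in G \mid \mu(\Env g) = 1\}$ and prove two things: first, that $N$ is a closed subgroup of $G$, so that $N$ itself is an element of $\Sub(G)$; and second, that $\mu(\Env N) = 1$ while $N$ is maximal with this property. Together these give both existence and the explicit description, and uniqueness is then immediate since any two maximal elements of a poset that has a maximum must coincide.

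\begin{proof}
Set $N \coloneqq \{g \in G \mid \mu(\Env g) = 1\}$. We first check that $N$ is a subgroup. Clearly $e \in N$ since $\Env e = \Sub(G)$. If $g, h \in N$, then $\Env g \cap \Env h$ has full measure, and every $H$ in this intersection contains both $g$ and $h$, hence contains $g h^{-1}$; thus $\Env(gh^{-1}) \supseteq \Env g \cap \Env h$ has measure $1$, so $gh^{-1} \in N$. Next, $N$ is closed: the complement of $\Env g$ is the open set $O_1(\{g\})$... more carefully, $\Env g = \Sub(G) \setminus O_2(\{g\})$ is closed, and the map $g \mapsto \mu(\Env g)$ is upper semicontinuous, or one argues directly that if $g_n \to g$ with $g_n \in N$ then every $H$ containing all $g_n$ is closed and hence contains $g$, so $\Env g \supseteq \bigcap_n \Env g_n$, which has measure $1$ by countable additivity; hence $g \in N$. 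So $N \in \Sub(G)$.

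Now we show $\mu(\Env N) = 1$. Since $G$ (and hence $N$) is hereditarily Lindel\"of, the subspace $N$ admits a countable dense subset $\{g_i\}_{i \in \N}$. For any $H \in \Sub(G)$, membership $N \subseteq H$ is equivalent to $g_i \in H$ for all $i$, because $H$ is closed and $\{g_i\}$ is dense in $N$; therefore $\Env N = \bigcap_{i} \Env g_i$. Each $\Env g_i$ has $\mu$-measure $1$ by definition of $N$, and a countable intersection of full-measure sets has full measure, so $\mu(\Env N) = 1$.

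It remains to see $N$ is the maximum of $\{H \in \Sub(G) \mid \mu(\Env H) = 1\}$. If $H \in \Sub(G)$ satisfies $\mu(\Env H) = 1$, then for every $g \in H$ we have $\Env H \subseteq \Env g$, so $\mu(\Env g) = 1$ and thus $g \in N$; hence $H \subseteq N$. This shows $N$ is the largest such subgroup, so $\ker(\mu)$ exists and equals $N$. Finally, if $N'$ also satisfied Definition~\ref{def:kernel}, then $N' \subseteq N$ and $N \subseteq N'$ by maximality of each, so $N' = N$, giving uniqueness.
\end{proof}

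I expect the only real point requiring care is the identity $\Env N = \bigcap_i \Env g_i$: it hinges on the fact that elements of $\Sub(G)$ are \emph{closed} subgroups, together with the separability of $N$ coming from the hereditarily Lindel\"of property already highlighted in the excerpt. Everything else is bookkeeping with countable additivity and the definitions of $\Env$ and the Chabauty topology.
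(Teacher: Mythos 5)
Your proof is correct, and it takes a genuinely different route from the paper's. The paper works ``from above'': it sets $W=\{H\in\Sub(G)\mid \mu(\Env H)=1\}$, lets $E$ be the closed subgroup generated by $\bigcup_{H\in W}H$, and proves $\mu(\Env E)=1$ by a covering argument in the Chabauty space --- since $\Env E=\bigcap_{H\in W}\Env H$, each point of the complement lies in a ball missing some $\Env H_\Delta$, and the Lindel\"of property of $\Sub(G)\setminus\Env E$ extracts a countable subfamily of $W$ whose envelopes already intersect in a full-measure set; the elementwise description is then derived at the end. You work ``from below'': you define $N=\{g\mid\mu(\Env g)=1\}$ directly, check it is a closed subgroup, and reduce $\mu(\Env N)=1$ to a countable intersection by choosing a countable dense subset of $N$ and using that elements of $\Sub(G)$ are closed. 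Both arguments hinge on the same countability reduction, but yours applies it inside $G$ rather than inside $\Sub(G)$, which is slightly more elementary and makes the identity $\ker(\mu)=\{g\mid\mu(\Env g)=1\}$ the starting point rather than a corollary. Two small remarks: the separability of $N$ is most cleanly justified by second countability of $G$ (every subspace of a second countable metrizable space is separable), which is what your appeal to the hereditarily Lindel\"of property amounts to here; and the parenthetical claim that $g\mapsto\mu(\Env g)$ is upper semicontinuous is neither needed nor obviously true --- your direct sequential argument (valid since $G$ is metrizable) is what actually establishes that $N$ is closed.
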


\begin{proof}
Let $W = \{H \in \Sub(G) \ \mid \ \mu(\Env H)=1\}$ and $E=\overline{\langle \cup_{H \in W} H \rangle} \in \Sub(G)$. It is enough to show that $\mu(\Env E)=1$. Note that $\Env E=\bigcap_{H \in W} \Env H$, so if $\Delta \not \in \Env E$ then $\Delta \not \in \Env H_{\Delta}$ for some $H_\Delta \in W$. Since $\Env H_\Delta$ is closed, there is $\epsilon_\Delta>0$ such that $\Env H_\Delta \cap B(\Delta,\epsilon_\Delta)=\emptyset$. Since $\Sub(G)\setminus \Env E$ is Lindel\"of, its cover $\bigcup_{\Delta \not \in \Env E} B(\Delta,\epsilon_\Delta)$ admits a countable subcover $\bigcup_{n\in \N} B(\Delta_n,\epsilon_{\Delta_n})$. It follows that $\mu(\Env E)=\mu(\bigcap_{n\in \N} \Env H_{\Delta_n})=1$. 

For any $g \in \ker(\mu)$, we have $\mu(\Env g) \geq \mu(\Env \ker(\mu)) = 1$. Conversely, if $\mu(\Env g)=1$ then the cyclic group $\langle g \rangle \in W$ and consequently $g \in \ker \mu$.
\end{proof}

\begin{definition}
A probability measure $\mu\in \Prob(\Sub(G))$ is \emph{faithful} if ${\ker(\mu)=\langle e\rangle}$.
\end{definition}

If $\mu\in \IRS(G)$, then $\ker(\mu)$ is normal. If $\mu=\Stab_*\nu$, the kernel of $\mu$ is the kernel of the action $G\acts(\Omega,\nu)$. The IRS $\mu$ is faithful if it is induced by a faithful action.

\subsubsection*{The normal closure of an IRS}

The other subgroup we are interested in is the normal closure of an IRS. This group was defined in \cite{Hartman_Tamuz_2016}.

\begin{definition}\label{def:closure}
Let $\mu\in \Prob(\Sub(G))$. The \emph{closure} of $\mu$ is \begin{equation*}
\langle \mu \rangle\coloneqq \min_{H\in \Sub(G)} \{H\mid \mu(\Sub(H))=1\}.
\end{equation*} If $\mu\in \IRS(G)$, then $\langle \mu \rangle$ is called the \emph{normal closure} of $\mu$.
\end{definition}

Existence and uniqueness of the closure can be shown exactly as for the kernel. We include the proof for the sake of completeness. 
\begin{lemma}\label{lem:closure}
For $\mu \in \Prob(\Sub(G))$, the closure $\langle \mu \rangle$ exists and is unique.
\end{lemma}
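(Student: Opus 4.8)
The plan is to transcribe the proof of \cref{lem:kernel} \emph{mutatis mutandis}, dualising every ingredient: envelopes $\Env H$ get replaced by the sets $\Sub(H)=\{K\in\Sub(G)\mid K\subseteq H\}$, and the ``closed subgroup generated by a union'' operation gets replaced by intersection. Concretely, I would set $W=\{H\in\Sub(G)\mid \mu(\Sub(H))=1\}$, which is nonempty since $G\in W$, and put $E=\bigcap_{H\in W}H$, a closed subgroup of $G$. Since $E\subseteq H$ for every $H\in W$ by construction, it suffices to prove that $E\in W$, i.e.\ that $\mu(\Sub(E))=1$: then $E$ is the minimum of $W$ and hence equals $\langle\mu\rangle$, while uniqueness is automatic, as two minima would each be contained in the other.

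The first ingredient is that $\Sub(H)$ is closed in $\Sub(G)$ for every $H\in\Sub(G)$ — this is the exact analogue of the observation, made just before \cref{def:kernel}, that $\Env F$ is closed. Indeed $\Sub(G)\setminus\Sub(H)=\{K\mid K\not\subseteq H\}=O_1(G\setminus H)$, which is open because $G\setminus H$ is open. The second ingredient is the identity $\Sub(E)=\bigcap_{H\in W}\Sub(H)$, which holds because a subgroup is contained in $E$ if and only if it is contained in every $H\in W$. Combining the two, $\Sub(G)\setminus\Sub(E)=\bigcup_{H\in W}\bigl(\Sub(G)\setminus\Sub(H)\bigr)$ is open.

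Now I would run the Lindel\"of argument exactly as in \cref{lem:kernel}. For each $\Delta\in\Sub(G)\setminus\Sub(E)$ choose $H_\Delta\in W$ with $\Delta\notin\Sub(H_\Delta)$; since $\Sub(H_\Delta)$ is closed there is $\epsilon_\Delta>0$ with $B(\Delta,\epsilon_\Delta)\cap\Sub(H_\Delta)=\emptyset$. As $\Sub(G)$ is hereditarily Lindel\"of, the open cover $\bigcup_\Delta B(\Delta,\epsilon_\Delta)$ of $\Sub(G)\setminus\Sub(E)$ admits a countable subcover $\bigcup_{n\in\N}B(\Delta_n,\epsilon_{\Delta_n})$, and then
\[
\mu\bigl(\Sub(G)\setminus\Sub(E)\bigr)\le\sum_{n\in\N}\mu\bigl(B(\Delta_n,\epsilon_{\Delta_n})\bigr)\le\sum_{n\in\N}\mu\bigl(\Sub(G)\setminus\Sub(H_{\Delta_n})\bigr)=0,
\]
so $\mu(\Sub(E))=1$, as needed.

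I do not expect a serious obstacle: this is a formal dualisation of the previous proof. The only two points that genuinely need a line of justification are that $\Sub(H)$ is Chabauty-closed (handled above via $O_1$) and that $\Sub(-)$ carries the intersection $\bigcap_{H\in W}H$ to $\bigcap_{H\in W}\Sub(H)$; both are immediate from the definitions. As before, the only real content is the Lindel\"of extraction that replaces an uncountable union of $\mu$-null sets by a countable one so that countable subadditivity applies.
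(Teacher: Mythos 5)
Your proof is correct and follows essentially the same route as the paper's own argument: take $W=\{H\mid\mu(\Sub(H))=1\}$, intersect to get the candidate minimum, use closedness of $\Sub(H)$ and the hereditarily Lindel\"of property to extract a countable subcover, and conclude by countable subadditivity. The only addition is your explicit verification that $\Sub(H)$ is Chabauty-closed via $\Sub(G)\setminus\Sub(H)=O_1(G\setminus H)$, which the paper leaves implicit.
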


\begin{proof} 
Let us denote $W=\{H \in \Sub(G) \mid \mu(\Sub(H)) = 1 \}$ and $K=\bigcap_{H \in W} H$. It is enough to show that $\mu(\Sub(K))=1$. Note that $\Sub(K)=\bigcap_{H \in W} \Sub(H)$. For $\Delta\in \Sub(G)\setminus \Sub(K)$, there is $H_\Delta\in W$ such that $\Delta \notin \Sub(H_\Delta)$. Since $\Sub(H_\Delta)$ is closed, there is $\epsilon_\Delta>0$ such that $\Sub(H_\Delta) \cap B(\Delta,\epsilon_\Delta)=\emptyset$. Since $\Sub(K)$ is closed, the set $\Sub(G)\setminus \Sub(K)$ is Lindel\"of. So the cover $\bigcup_{\Delta\nsubseteq K} B(\Delta,\epsilon_\Delta)$ of $\Sub(G)\setminus \Sub(K)$ has a countable subcover $\bigcup_{n\in \N} B(\Delta_n,\epsilon_{\Delta,n})$. It follows that $\mu(\Sub(K))=\mu(\bigcap_{n\in \N} \Sub(H_{{\Delta}_n}))=1$.
\end{proof}

Unlike for the kernel, we do not have a good elementwise description of the closure. But we can characterise a nice set of generators.

\begin{definition} \label{def:essential}
For $\mu \in \Prob(\Sub(G))$, we denote 
\[Q(\mu) \coloneqq \{g \in G \ \mid \ \mu\left(O_1(B(g,\epsilon)\right) > 0 , \ \forall \epsilon > 0\}.\] An element $g \in Q(\mu)$ will be called a \emph{$\mu$-essential element}.
\end{definition}

\begin{lemma}\label{lem:closuregenerators}
The set of $\mu$-essential elements generates (topologically) $\langle \mu \rangle$, namely  $\langle \mu \rangle = \overline{\langle Q(\mu)\rangle}$.
\end{lemma}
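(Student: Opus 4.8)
The plan is to prove the two inclusions $\overline{\langle Q(\mu)\rangle} \subseteq \langle\mu\rangle$ and $\langle\mu\rangle \subseteq \overline{\langle Q(\mu)\rangle}$ separately, the first being easy and the second requiring a covering argument in the spirit of Lemmas \ref{lem:kernel} and \ref{lem:closure}.

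For the inclusion $\overline{\langle Q(\mu)\rangle} \subseteq \langle\mu\rangle$: since $\langle\mu\rangle$ is a closed subgroup, it suffices to show $Q(\mu) \subseteq \langle\mu\rangle$. Suppose $g \notin \langle\mu\rangle$. Since $\langle\mu\rangle$ is closed, there is $\epsilon > 0$ with $B(g,\epsilon) \cap \langle\mu\rangle = \emptyset$; I would want this to imply that every $H \in \Sub(\langle\mu\rangle)$ misses $B(g,\epsilon)$, i.e. $\Sub(\langle\mu\rangle) \cap O_1(B(g,\epsilon)) = \emptyset$. This needs a small argument: if $H \subseteq \langle\mu\rangle$ and $H$ meets $B(g,\epsilon)$ at some $h$, then $h \in \langle\mu\rangle \cap B(g,\epsilon)$, contradiction — so in fact the implication is immediate. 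Since $\mu(\Sub(\langle\mu\rangle)) = 1$ by definition of the closure, we get $\mu(O_1(B(g,\epsilon))) = 0$, hence $g \notin Q(\mu)$.

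For the reverse inclusion $\langle\mu\rangle \subseteq \overline{\langle Q(\mu)\rangle}$: by the minimality in Definition \ref{def:closure}, it is enough to show $\mu(\Sub(\overline{\langle Q(\mu)\rangle})) = 1$, equivalently $\mu(\Sub(G) \setminus \Sub(\overline{\langle Q(\mu)\rangle})) = 0$. Write $N = \overline{\langle Q(\mu)\rangle}$. If $\Delta \notin \Sub(N)$, then $\Delta$ contains some element $g_\Delta \notin N$; since $N$ is closed, pick $\epsilon_\Delta > 0$ with $B(g_\Delta, \epsilon_\Delta) \cap N = \emptyset$. Then $g_\Delta \notin Q(\mu)$ (as $g_\Delta \notin N \supseteq \langle Q(\mu)\rangle$, and more to the point $g_\Delta \notin Q(\mu)$ since $Q(\mu) \subseteq N$), so by Definition \ref{def:essential} there is $\delta_\Delta > 0$ with $\mu(O_1(B(g_\Delta, \delta_\Delta))) = 0$; shrinking, assume $\delta_\Delta \le \epsilon_\Delta$. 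Now the sets $\{O_1(B(g_\Delta, \delta_\Delta))\}$ cover $\Sub(G) \setminus \Sub(N)$: indeed $\Delta \in O_1(B(g_\Delta,\delta_\Delta))$ since $g_\Delta \in \Delta \cap B(g_\Delta,\delta_\Delta)$. The set $\Sub(G) \setminus \Sub(N)$ is open (as $\Sub(N)$ is closed), hence Lindelöf, so this cover admits a countable subcover $\bigcup_{n} O_1(B(g_{\Delta_n}, \delta_{\Delta_n}))$, each term of which is $\mu$-null. Therefore $\mu(\Sub(G) \setminus \Sub(N)) = 0$, as desired.

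The one point that requires genuine care — and which I expect to be the main subtlety rather than obstacle — is the very last covering step: one must make sure that the open sets $O_1(B(g_\Delta,\delta_\Delta))$, indexed by points $\Delta$ of a non-closed (but open) set, really do cover that set and that the Lindelöf property applies. This is exactly parallel to the arguments in Lemmas \ref{lem:kernel} and \ref{lem:closure}, where hereditary Lindelöfness of $\Sub(G)$ is invoked, so the mechanism is already in place; the only new ingredient is translating "$g \notin Q(\mu)$" into a $\mu$-null neighbourhood via Definition \ref{def:essential}, which is immediate. Everything else is bookkeeping with the Chabauty sub-basic sets $O_1$.
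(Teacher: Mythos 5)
Your proposal is correct and follows essentially the same route as the paper: the easy inclusion $Q(\mu)\subseteq\langle\mu\rangle$ via closedness of $\langle\mu\rangle$ (you phrase it contrapositively, the paper argues directly with a convergent sequence), and the reverse inclusion via a hereditary-Lindel\"of covering argument showing $\mu(\Sub(\overline{\langle Q(\mu)\rangle}))=1$. The only cosmetic difference is that you extract a countable subcover of $\Sub(G)\setminus\Sub(N)$ directly, while the paper first covers $G\setminus\overline{\langle Q(\mu)\rangle}$ by balls in $G$ and then passes to the sets $O_1$; both work, and your auxiliary $\epsilon_\Delta$ is not actually needed.
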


\begin{proof}
For any $g \in  Q(\mu)$ and any $\epsilon >0$, the sets $O_1(B(g,\epsilon))$ and $\Sub(\langle \mu \rangle)$ intersect, since $\mu(O_1(B(g,\epsilon)))>0$ and ${\mu(\Sub(\langle \mu \rangle))=1}$. So for every $n\in \N$ there is $g_n \in \langle \mu \rangle \cap B(g,1/n)$. Since $\langle \mu \rangle$ is a closed group, this implies that $Q(\mu) \subset \langle \mu \rangle$. Conversely, by definition for every $g \not \in \overline{\langle Q(\mu) \rangle}$ there is some $\epsilon>0$ such that $\mu(O_1(B(g,\epsilon)))=0$. Appealing again to the Lindel\"of  property, we obtain a countable cover $G \setminus \overline{\langle Q(\mu) \rangle} = \cup_{i \in \N} B(g_i,\epsilon_i)$ by such balls. Consequently, \[\Sub(G) \setminus \Sub(\overline{\langle Q(\mu) \rangle}) \subset \cup_{i \in N} O_1(B(g_i,\epsilon_i)),\]
where the latter set has measure zero as a countable union of null sets. 
\end{proof}

When $G$ is a discrete group, then $Q(\mu) = \{g \in G\mid \mu(\Env g)>0\}$. In this case, \cref{lem:closuregenerators} boils down to Lemma 3.3 in \cite{Bader_Duchesne_Lecureux_Wesolek_2016}.

\begin{definition}
A probability measure $\mu\in \Prob(\Sub(G))$ is \emph{trivial} if $\langle\mu\rangle=\trivgp$ or equivalently $\mu=\delta_{\langle e\rangle}$. If on the other hand $\langle \mu \rangle=G$, then $\mu$ is called \emph{spanning} (see Definition 3.2 in \cite{Bader_Duchesne_Lecureux_Wesolek_2016}).
\end{definition}

If ${\mu\in \IRS(G)}$, then $\langle \mu \rangle$ is normal. The trivial IRS $\delta_{\langle e\rangle}$ is induced by an essentially free action.

For Dirac measures $\delta_H$, we have $\ker(\delta_H)=\langle \delta_H \rangle=H$. In that sense, the tension between $\ker(\mu)$ and $\langle \mu \rangle$ measures how far a probability measure on $\Sub(G)$ is from being a subgroup.

\subsection{Intersection of IRSs}

A prominent role in the proof of \cref{thm:acylin} is played by the co-induction operation defined by Kechris and Quorning in \cite{Kechris_Quorning_2019}. We will discuss the intersection of probability measures more generally here, which gives us a connection to the kernel of a probability measure.

\begin{definition}\label{def:intersection}
The intersection of closed subgroups defines a map $\cap:\Sub(G)\times \Sub(G)\to \Sub(G), (\Delta_1,\Delta_2)\mapsto \Delta_1\cap \Delta_2$. For $\mu,\nu\in \Prob(\Sub(G))$, we define their \emph{intersection} by $\mu\cap\nu\coloneqq \cap_*(\mu\times\nu)\in \Prob(\Sub(G))$.

Intersecting probability measures on $\Sub(G)$, we can produce new IRSs. Given $\mu\in \Prob(\Sub(G))$ and $T=G/\Stab(\mu)$, we get an IRS $\bigcap_{t\in T}t_*\mu$.
\end{definition}

\begin{remark}
One can generalize this construction: Given $\mu\in \Prob(\Sub(G))$ and $T=G/\Stab(\mu)$, we define \begin{equation*}
\phi: 2^T\to \Prob(G), \Theta \mapsto \bigcap_{t\in\Theta} t_*\mu.
\end{equation*} Let $\nu \in \Prob^G(2^T)$ be a $G$-invariant probability measure on $2^T$. Then $\operatorname{bar}(\phi_*\nu)$ is an IRS, since $\phi$ and $\operatorname{bar}$ are $G$-equivariant.

This construction is a slight generalization of the definition of \emph{intersectional IRSs} by Hartman and Yadin in \cite{Hartman_Yadin_2018}. They look at the case $\mu=\delta_K$ for $K\leq G$ such that $\Stab(\mu)=N_G(K)$ has infinite index in $G$. We will not use this more general construction.
\end{remark}
 
\begin{example}
Let $\Gamma_0 \leq \Gamma$ be countable groups. Kechris and Quorning defined a \emph{co-induction operation} $\operatorname{CIND}_{\Gamma^{0}}^{\Gamma}:\IRS(\Gamma_0)\to \IRS(\Gamma)$ in \cite{Kechris_Quorning_2019}. For any $\theta\in \IRS(\Gamma_0)$, the IRS $\operatorname{CIND}_{\Gamma_0}^\Gamma(\theta)$ is $\bigcap_{g\in \Gamma/\Gamma_0} g_*\theta$. See Remark 5.2 in \cite{Kechris_Quorning_2019} as well as the proof of \cref{prop:wm} below for the equivalence to the more standard definition used there.
\end{example}

The following proposition will be very useful for us. It is due to Kechris-Quorning \cite[Proposition 7.1]{Kechris_Quorning_2019} following Ioana \cite[Lemma 2.1]{Ioana:11}. For the convenience of the reader we repeat the proof.

\begin{proposition} \label{prop:wm}
Let $\Gamma$ be a countable group, $\Gamma_0 < \Gamma$ an infinite index subgroup and $\mu \in \IRS(\Gamma_{0})$. Set $m \coloneqq \bigcap_{\gamma \in \Gamma/\Gamma_0} \gamma_{*} \mu$. Then $m$ is a weakly mixing IRS on $\Gamma$.
\end{proposition}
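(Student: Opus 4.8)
The plan is to realize $m$ as the IRS associated to a concrete probability measure preserving action and then verify weak mixing of that action directly, following the Ioana-style argument in \cite{Ioana:11} and \cite{Kechris_Quorning_2019}. First I would fix a pmp action $\Gamma_0 \acts (Y, \nu_0)$ with $\Stab_* \nu_0 = \mu$, and form the co-induced action $\Gamma \acts (X, \nu) = \operatorname{CIND}_{\Gamma_0}^{\Gamma}(Y, \nu_0)$: choosing a transversal $\{s_t : t \in \Gamma/\Gamma_0\}$ with $s_{\Gamma_0} = e$, one has $X = Y^{\Gamma/\Gamma_0}$ with the product measure $\nu = \nu_0^{\otimes \Gamma/\Gamma_0}$, and $\Gamma$ acts by the usual cocycle formula permuting coordinates via the $\Gamma$-action on $\Gamma/\Gamma_0$ and applying $\Gamma_0$-elements in each coordinate. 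The point sstabilizer of $x = (x_t)_t \in X$ is $\bigcap_{t} s_t \Stab_{\Gamma_0}(x_t) s_t^{-1}$, which pushes forward under $\Stab_*$ to exactly $\bigcap_{t \in \Gamma/\Gamma_0} \gamma_* \mu = m$ once one matches the translate $s_{t *}\mu$ with $\gamma_* \mu$ for $\gamma \in t$; this is the equivalence asserted in Remark 5.2 of \cite{Kechris_Quorning_2019} that I would spell out. So it suffices to prove that $\Gamma \acts (X, \nu)$ is weakly mixing.

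For weak mixing I would use the characterization: an action is weakly mixing iff it has no nonconstant finite-dimensional subrepresentation in $L^2(X,\nu)$, equivalently no nontrivial finite-dimensional $\Gamma$-invariant subspace of $L^2_0(X,\nu)$. Suppose $V \subseteq L^2(X,\nu)$ is a finite-dimensional $\Gamma$-invariant subspace; I want to show $V$ consists of constants. Decompose $L^2(X,\nu) = \bigotimes_{t \in \Gamma/\Gamma_0} L^2(Y,\nu_0)$ and expand along the Fourier-type decomposition indexed by finitely supported functions $\Gamma/\Gamma_0 \to L^2_0(Y,\nu_0) \oplus \C$: for a finite subset $F \subseteq \Gamma/\Gamma_0$ let $H_F \subseteq L^2(X)$ be the closed span of elementary tensors supported exactly on $F$ (i.e. in $L^2_0(Y)$ on coordinates in $F$ and constant elsewhere). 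These $H_F$ are mutually orthogonal, $L^2(X) = \bigoplus_F H_F$, and $\Gamma$ permutes the $H_F$ according to the $\Gamma$-action on finite subsets of $\Gamma/\Gamma_0$. A nonzero vector in $V$ has nonzero component in some $H_F$ with $F \neq \emptyset$; projecting $V$ onto $\bigoplus_{|F'| = |F|} H_{F'}$ we get a nonzero finite-dimensional invariant subspace, so the $\Gamma$-orbit of the relevant finite subsets $F$ of $\Gamma/\Gamma_0$ must be finite. But $\Gamma_0$ has infinite index, so $\Gamma/\Gamma_0$ is an infinite transitive $\Gamma$-set, and the $\Gamma$-orbit of any nonempty finite subset of an infinite transitive $\Gamma$-set is infinite — contradiction. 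Hence $V \subseteq H_\emptyset = \C$, proving weak mixing.

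The main obstacle, and the step I would be most careful about, is the bookkeeping in the second paragraph: correctly identifying the co-induced action's coordinates, transversal, and cocycle so that the stabilizer computation genuinely yields $\bigcap_{\gamma \in \Gamma/\Gamma_0} \gamma_* \mu$ rather than some variant (the dependence on the choice of transversal, and the need to know the construction is independent of that choice up to isomorphism, requires attention). The weak-mixing argument itself is then essentially formal once the tensor-product/orbit picture is set up: the only nontrivial input is the combinatorial fact that an infinite transitive permutation action has no finite invariant family of nonempty finite subsets, which is immediate. One alternative I would keep in mind, closer to Ioana's original approach, is to prove weak mixing via the criterion that $\Gamma \acts X \times X$ is ergodic, using that $X \times X$ is a co-induced action of $\Gamma_0 \acts Y \times Y$ together with the fact that the ``diagonal'' contributions get washed out because $\Gamma$ moves coordinates around an infinite index coset space; but the finite-dimensional-subrepresentation route above is cleaner to write.
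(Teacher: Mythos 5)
Your proposal is correct in outline, and the first half (realizing $m$ as the stabilizer IRS of the co-induced action $\Gamma \acts Y^{\Gamma/\Gamma_0}$) is exactly the paper's strategy; but your route to weak mixing is genuinely different. The paper verifies the correlation criterion directly: for $f_1,\dots,f_n \in L^2_0(\nu^T)$ it approximates by elementary tensors supported on finitely many coordinates $t_1,\dots,t_N$, uses $[\Gamma:\Gamma_0]=\infty$ to find $\gamma$ with $\{t_1,\dots,t_N\}\cap\{\gamma\cdot t_1,\dots,\gamma\cdot t_N\}=\emptyset$, and concludes $\gamma f_i$ and $f_j$ are independent, so $\abs{\langle \gamma f_i,f_j\rangle}$ is small. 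You instead rule out nonconstant finite-dimensional invariant subspaces via the orthogonal decomposition $L^2(X)=\bigoplus_F H_F$ over finite $F\subset \Gamma/\Gamma_0$, reducing to the combinatorial fact that an infinite transitive $\Gamma$-set has no nonempty finite subset with finite orbit. Both are standard equivalent characterizations of weak mixing; your version avoids the $\epsilon$-approximation bookkeeping at the cost of a small argument you only gesture at, namely why a nonzero finite-dimensional invariant $W\subset\bigoplus_{\abs{F}=k}H_F$ forces a finite orbit of index sets (the clean way: $F\mapsto \operatorname{tr}(P_F P_W)$ is a $\Gamma$-invariant, nonnegative, summable function on the index set, hence vanishes on infinite orbits).

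One caveat in your first paragraph: the identification $\Stab(x)=\bigcap_t s_t\Stab_{\Gamma_0}(x_t)s_t^{-1}$ holds only for $\nu$-a.e.\ $x$ and only after realizing $\mu$ by an action on an \emph{atomless} space $(Y,\nu_0)$ (as the paper does via \cite{Abert_Glasner_Virag_2014}); atomlessness is what excludes, almost surely, stabilizing elements $\gamma$ that move some coset $t$. Without it the formula fails (e.g.\ $\mu=\delta_{\Gamma_0}$ realized on a point gives $\Stab(x)=\Gamma$ rather than $\Core_\Gamma(\Gamma_0)$). Since you flagged this step as the one needing care, this is the precise point to address; the paper sidesteps the pointwise stabilizer computation by instead checking $m(\Env F)=\hat m(\Env F)$ on finite $F\subset\Core_\Gamma(\Gamma_0)$, which is where it invokes atomlessness.
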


\begin{proof}
By \cite[Proposition 13]{Abert_Glasner_Virag_2014} we can find a measure preserving action $\alpha: \Gamma_0 \times (Y,\nu) \rightarrow (Y,\nu)$ on an atomless probability space $(Y,\nu)$ such that the IRS is given by $\mu = \Stab_{*}(\nu)$.

Let $T = \{t_1,t_2,\ldots\}$ be a transversal for $\Gamma_0$ in $\Gamma$. For every $\gamma \in \Gamma, t \in T$ we obtain a unique decomposition
\[\gamma t = (\gamma \cdot t) \beta(\gamma,t), \quad \gamma \cdot t \in T, \ \ \beta(\gamma,t) \in \Gamma_0,\] where $(\gamma,t)\mapsto \gamma \cdot t$ is the natural action of $\Gamma$ on $T$ and $\beta:\Gamma \times T \rightarrow \Gamma_0$ is a cocycle in the sense that $\beta(\gamma_1 \gamma_2,t) = \beta(\gamma_1,\gamma_2\cdot t)\beta(\gamma_2,t)$ for any $\gamma_1,\gamma_2 \in \Gamma, t \in T$.
We think of $\Xi= T \times (Y,\nu)$ as a bundle over $T$ with fiber $(Y,\nu)$. The cocycle $\beta$ gives rise to an action $\alpha_1: \Gamma \times \Xi \rightarrow \Xi$ given by the formula 
\[\alpha_1(\gamma) (t,y) = (\gamma \cdot t, \alpha(\beta(\gamma,t)) y).\]
When $T$ is finite\footnote{Or, more generally in the topological case, when $\Gamma/\Gamma_0$ admits a $\Gamma$-invariant probability measure.}, $\Xi$ is a probability space and the corresponding IRS on $\Gamma$ is isomorphic to $\int_{T} t_{*} \mu dt$ and known as the induced IRS. 

The co-induced action $\tilde{\alpha}:\Gamma \times \Omega \rightarrow \Omega$ introduced by Ioana is the natural action of $\Gamma$ on the sections $\Omega=(Y,\nu)^T = (Y^T,\nu^T)$ of the bundle $\Xi$ endowed with the product measure. This is a probability space even when $T$ is infinite. The co-induced action is given by the formula 
\[(\tilde{\alpha} (\gamma) \omega)(t) = \alpha(\beta(\gamma,\gamma^{-1}\cdot t))\left( \omega(\gamma^{-1}\cdot t)\right) = \alpha(\beta(\gamma^{-1},t)^{-1})\left( \omega(\gamma^{-1}\cdot t)\right).\]
Geometrically, this is $\Gamma$ acting on sections, by translating their ``graphs''. Let $\hat{m} = \mathrm{Coind}_{\Gamma_0}^{\Gamma} \mu := \Stab_*(\nu^T)$ be the associated IRS.

 We now show that $m = \hat{m}$, which Kechris-Quorning proved in Theorem 5.1. and Remark 5.2.  Recall that $\Core_{\Gamma}(\Gamma_0) = \bigcap_{\gamma \in \Gamma} \gamma \Gamma_0 \gamma^{-1}$ is the kernel of the action $\Gamma \curvearrowright T$. Since $\mu \in \IRS(\Gamma_{0})$, it is easy to see that $\langle m \rangle < \bigcap_{\gamma \in \Gamma/\Gamma_0} \gamma \langle m \rangle \gamma^{-1}<\Core_{\Gamma}(\Gamma_0)$ (see \cref{lem:kernelclosure}). On the other hand, if $\gamma^{-1} \cdot t \ne t$ for some $t \in T$, the following  probability vanishes
\[\nu^T (\left\{\omega\in \Omega \mid \omega(t) = \alpha(\beta(\gamma^{-1},t))\left(\omega(\gamma^{-1}\cdot t)\right)\right\})= 0,\]
since $(Y,\nu)$ has no atoms. So $\langle \hat{m} \rangle < \Core_{\Gamma}(\Gamma_0)$ as well. 
Now in order to show that $m = \hat{m}$ it is enough to show that $m(\Env F) = \hat{m}(\Env F)$ for any finite set $F \subset \Core_{\Gamma}(\Gamma_0)$, since sets of the form $\Env F$ generate the $\sigma$-algebra on $\Sub(\Gamma)$. Note that $\beta(\cdot, t):\Core_{\Gamma}(\Gamma_0) \rightarrow \Core_{\Gamma}(\Gamma_0)$ is an homomorphism given by $\beta(\gamma,t) = t^{-1}\gamma t, \ \forall t \in T$ by the definition of the cocycle. Given any finite set $F \subset \Core_{\Gamma}(\Gamma_0)$ we have
\begin{align*}
\hat{m}(\Env F)& =\nu^T(\Fix_{\tilde{\alpha}}(F)) = \prod_{t \in T} \nu \left(\Fix_{\alpha}(\beta(F^{-1},t)\right)\\
&= \prod_{t \in T} \nu (\Fix_{\alpha}(t^{-1}F^{-1}t)) 
=\prod_{t \in T} t_{*}\mu (\Env(F^{-1}))\\
&= \prod_{t \in T} t_{*}\mu (\Env F) = m(\Env F).
\end{align*} This proves $m=\hat{m}$ is the IRS given by the co-induced action.

To establish weak mixing we show that given $f_1,f_2 \ldots f_n \in L^2_0(\Omega,\nu^T)$ and $\epsilon > 0$ one can find $\gamma \in \Gamma$ such that ${\abs{\langle \gamma f_i,f_j \rangle} < \epsilon}, \ \forall 1 \le i,j \le n$ (\cite[Theorem 4.1]{BR_mixing}). Note that $L^2(Y^T,\nu^T) = \otimes_{t \in T} L^2(Y,\nu)$ is a Hilbert tensor product, so it is enough to check functions of the form $f_i = f_{i,1} \otimes f_{i,2} \otimes \ldots \otimes f_{i,N}$ with $\norm{f_i} = \norm{f_{i,1}}\norm{f_{i,2}} \ldots \norm{f_{i,N}} < \frac{\epsilon}{2}$, for some finite $N \in \N$ and $f_{i,j} \in L^2(Y,\nu)$. Since $[\Gamma:\Gamma_0] = \infty$ we can find $\gamma \in \Gamma$ such that $\{t_1,t_2,\ldots, t_N\} \cap \{\gamma \cdot t_1, \ldots, \gamma \cdot t_N\} = \emptyset$. But then for every $1 \le i,j \le N$ the functions $\gamma f_i$ and $f_j$ are independent, as they are supported on disjoint set of variables, and 
$$\abs{\langle \gamma f_i, f_j \rangle} = \norm{\gamma f_i} \norm{f_j} = \norm{f_i} \norm {f_j} \le \epsilon^2,$$ 
as required.
\end{proof}

Instead of the intersection, one can also use the group generated by subgroups (see Remark 5.2.(3) in \cite{Kechris_Quorning_2019}).

\begin{definition}\label{def:generation}
The group generated by subgroups defines a map $\langle\rangle:\Sub(G)\times \Sub(G)\to \Sub(G), (\Delta_1,\Delta_2)\mapsto \langle\Delta_1,\Delta_2\rangle$. For $\mu,\nu\in \Prob(\Sub(G))$, we define $\langle\mu,\nu\rangle\coloneqq \langle\rangle_*(\mu\times\nu)\in \Prob(\Sub(G))$.
\end{definition}

Using $\langle\rangle$ instead of $\cap$, one can produce new IRSs analogously. We will use both operations now to give another description of the kernel and the closure of a probability measure on $\Sub(G)$.

\begin{lemma}
For every $\mu \in \Prob(\Sub(G))$, we have $\delta_{\ker(\mu)}=\bigcap_{\N} \mu$ and $\delta_{\langle \mu \rangle}=\langle \rangle_\N \mu$
\end{lemma}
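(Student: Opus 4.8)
The plan is to realise both sides as the almost-sure value of a random closed subgroup. Fix i.i.d.\ samples $\Delta_1,\Delta_2,\dots$ from $\mu$ and put $\Delta_\infty:=\bigcap_n\Delta_n$ and $\Delta^\infty:=\overline{\langle\Delta_n\mid n\in\N\rangle}$. As in the binary case one checks that the maps $\Sub(G)^{\N}\to\Sub(G)$, $(\Delta_n)_n\mapsto\bigcap_n\Delta_n$ and $(\Delta_n)_n\mapsto\overline{\langle\Delta_n\mid n\in\N\rangle}$, are Borel, so $\bigcap_\N\mu$ is the law of $\Delta_\infty$ and $\langle\rangle_\N\mu$ is the law of $\Delta^\infty$; the assertion is that $\Delta_\infty=\ker(\mu)$ a.s.\ and $\Delta^\infty=\langle\mu\rangle$ a.s. Two of the four inclusions are immediate: $\mu(\Env\ker(\mu))=1$ by \cref{lem:kernel} and $\mu(\Sub(\langle\mu\rangle))=1$ by \cref{lem:closure}, so almost surely $\ker(\mu)\le\Delta_n$ and $\Delta_n\le\langle\mu\rangle$ for \emph{every} $n$, whence $\ker(\mu)\le\Delta_\infty$ and $\Delta^\infty\le\langle\mu\rangle$.

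For $\langle\mu\rangle\le\Delta^\infty$ a.s.\ I would use \cref{lem:closuregenerators}: it is enough to show that each $\mu$-essential element $g\in Q(\mu)$ lies in $\Delta^\infty$ almost surely. This is Borel--Cantelli — for each $m$ the independent events $\{\Delta_n\cap B(g,1/m)\ne\emptyset\}$ each have probability $\mu(O_1(B(g,1/m)))>0$, so a.s.\ infinitely many occur, giving elements of $\langle\Delta_n\mid n\in\N\rangle\subseteq\Delta^\infty$ converging to $g$, and $\Delta^\infty$ is closed. To get all of $Q(\mu)$ inside $\Delta^\infty$ simultaneously (the point being that $G$ may be uncountable), observe that $D:=\{g\in G\mid \mathbb{P}[g\in\Delta^\infty]=1\}$ is a closed subgroup of $G$ containing $Q(\mu)$ — stability under products, inverses and limits uses only that $\Delta^\infty$ is an a.s.\ closed subgroup together with countable additivity — and that $D$, as a subspace of the second countable space $G$, has a countable dense subset, which lies in $\Delta^\infty$ almost surely; hence $D\le\Delta^\infty$ a.s.\ and therefore $\langle\mu\rangle=\overline{\langle Q(\mu)\rangle}\le D\le\Delta^\infty$ a.s. This proves $\langle\rangle_\N\mu=\delta_{\langle\mu\rangle}$.

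The main obstacle is the remaining inclusion $\Delta_\infty\le\ker(\mu)$ a.s.: a Borel--Cantelli argument is unavailable because ``$\bigcap_n\Delta_n$ meets a neighbourhood of $g$'' is not an event about the individual $\Delta_n$, and a priori the intersection might pick up some element outside $\ker(\mu)$ — a different one in each configuration — on a set of positive probability. Write $\nu:=\bigcap_\N\mu$ and $K:=\ker(\mu)$. First, $\nu(\Env g)=\prod_n\mu(\Env g)$ is $1$ for $g\in K$ and $0$ for $g\notin K$ (an infinite product of a constant $<1$ vanishes), so $\ker(\nu)=K$ by \cref{lem:kernel}; since $\nu(\Sub(\langle\nu\rangle))=1$, it suffices to prove $\langle\nu\rangle\le K$, i.e.\ $Q(\nu)\subseteq K$ by \cref{lem:closuregenerators}. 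So suppose $g\in Q(\nu)\setminus K$; then $\mathbb{P}[g\in\Delta_\infty]=\nu(\Env g)=0$. The key structural input is that $\bigcap_\N$ is idempotent — intersecting a doubly-indexed i.i.d.\ array first within rows and then across rows has the same law as intersecting all entries at once — so $\nu=\bigcap_\N\nu$ and we may write $\Delta_\infty=\bigcap_i H_i$ with $H_i$ i.i.d.\ from $\nu$. Choose $\eps>0$ small enough that $\overline B:=\overline{B(g,\eps)}$ is compact (local compactness) and set $q:=\mathbb{P}[\Delta_\infty\cap\overline B\ne\emptyset]$; then $q=\nu(\{H\mid H\cap\overline B\ne\emptyset\})\ge\nu(O_1(B(g,\eps)))>0$ because $g\in Q(\nu)$. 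Conditioning successively on $H_1,\dots,H_n$, the next sample $H_{n+1}$ must meet the compact set $(\bigcap_{i\le n}H_i)\cap\overline B$, which has conditional probability at most $q$ (and $0$ if that set is empty), so $\mathbb{P}[(\bigcap_{i\le n}H_i)\cap\overline B\ne\emptyset]\le q^{n}$ for all $n$. Since the sets $(\bigcap_{i\le n}H_i)\cap\overline B$ are compact and decrease to $\Delta_\infty\cap\overline B$, continuity of measure gives $q=\lim_n\mathbb{P}[(\bigcap_{i\le n}H_i)\cap\overline B\ne\emptyset]\le\lim_n q^{n}$, which forces $q=1$ as $q>0$. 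Applying this with $\eps=1/m$ for large $m$ and intersecting the probability-one events $\{\Delta_\infty\cap\overline{B(g,1/m)}\ne\emptyset\}$, we conclude that a.s.\ $\Delta_\infty$ meets every ball about $g$, hence contains $g$ (being closed) — contradicting $\mathbb{P}[g\in\Delta_\infty]=0$. Therefore $Q(\nu)\subseteq K$, so $\langle\nu\rangle\le K$ and $\nu(\Sub(K))=1$, which together with $\nu(\Env K)=1$ gives $\nu=\delta_K=\delta_{\ker(\mu)}$.
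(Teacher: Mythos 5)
Your proof is correct, and your treatment of $\langle\rangle_\N\mu$ --- the second Borel--Cantelli lemma for each $g\in Q(\mu)$, followed by a separability argument to capture all of $Q(\mu)$ simultaneously --- is essentially the paper's argument (the paper takes a countable dense subset of $\langle Q(\mu)\rangle$ where you take one of your closed group $D$; same idea). Where you genuinely diverge is the hard inclusion $\Delta_\infty\le\ker(\mu)$, and there the obstacle you describe is not actually an obstacle: although $\{\bigcap_n\Delta_n\cap B(g,\epsilon)\ne\emptyset\}$ is not determined by any single $\Delta_n$, it is \emph{contained} in the intersection of the independent events $\{\Delta_n\cap B(g,\epsilon)\ne\emptyset\}$, so
\[
\Bigl(\bigcap_{\N}\mu\Bigr)\bigl(O_1(B(g,\epsilon))\bigr)\ \le\ \prod_{n\in\N}\mu\bigl(O_1(B(g,\epsilon))\bigr),
\]
which vanishes as soon as $\mu(O_1(B(g,\epsilon)))<1$; and for $g\notin\ker(\mu)$ such an $\epsilon$ exists, because $\mu(O_1(B(g,\epsilon)))=1$ for every $\epsilon$ would force $g\in\Delta$ almost surely ($\Delta$ being closed) and hence $g\in\ker(\mu)$ by \cref{lem:kernel}. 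A Lindel\"of cover of $G\setminus\ker(\mu)$ by such balls then yields $\bigl(\bigcap_\N\mu\bigr)(\Sub(\ker\mu))=1$ directly; this is the paper's route and it is very short. Your replacement --- idempotence of $\bigcap_\N$, the conditional estimate $\mathbb{P}[C_{n+1}\ne\emptyset]\le q\,\mathbb{P}[C_n\ne\emptyset]$ for $C_n=(\bigcap_{i\le n}H_i)\cap\overline{B}$, and the finite-intersection property of these compacta --- is a valid, if considerably longer, alternative; it proves along the way the stronger zero--one law $\bigl(\bigcap_\N\mu\bigr)(\{H\mid H\cap\overline{B}\ne\emptyset\})\in\{0,1\}$ for compact $\overline{B}$, but at the cost of invoking local compactness (to make $\overline{B(g,\epsilon)}$ compact), which the direct argument never needs. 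Your worry about the intersection picking up ``a different element in each configuration'' is handled identically in both arguments, by a countable covering of $G\setminus\ker(\mu)$: explicitly in the paper, and implicitly in your appeal to \cref{lem:closuregenerators}.
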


\begin{proof}
We fix a compatible metric on $G$ and, for the sake of this proof only, let us denote by \[O_{g,\epsilon}\coloneqq O_{1}(B(g,\epsilon)) = \{\Delta \in \Sub(G)\mid \Delta \cap B(g,\epsilon) \neq \emptyset\}.\] Let us set also $m\coloneqq \bigcap_{\N}\mu$, and $s\coloneqq\langle \rangle_{\N}\mu$. 

By definition, we have $\{\ker(\mu)\} = \Env \ker(\mu) \cap \Sub(\ker(\mu))$. Since $m(\Env \ker(\mu))=\prod_{\N} \mu(\Env \ker(\mu))=1$, it is enough to show that $m(\Sub(\ker(\mu)))=1$. If $\mu(O_{g,\epsilon}) = 1, \ \forall \epsilon > 0$ then a $\mu$-random (closed) subgroup $\Delta$ almost surely contains a convergent sequence $g_n \rightarrow g$, so $g \in \ker(\mu)$ by \cref{lem:kernel}. Thus for $g \not \in \ker(\mu)$ and $\epsilon>0$ small enough, we have $\mu(O_{g,\epsilon}) < 1$ and hence $m(O_{g,\epsilon}) < \prod_{\N}\mu(O_{g,\epsilon}) = 0$. Let $\{B_1,B_2,\ldots\}$ be a countable cover of $G \setminus \ker(\mu)$ by open balls of the form $B_i = B(h_i,\epsilon_i)$ with $m(O_{h_i,\epsilon_i})=0$. Then $\{O_{h_1,\epsilon_1},O_{h_2,\epsilon_2},\ldots\}$ is a countable cover of $\Sub(G) \setminus \Sub(\ker(\mu))$ and $m(\Sub(\ker(\mu)))=1-m(\bigcup_i O_{h_i,\epsilon_i})=1$, which proves the first claim.

The second claim is proved similarly. By definition, it is enough to show $s(\Env \langle \mu \rangle)=1$. If $g \in Q(\mu)$ is a $\mu$-essential element, then by definition $\mu(O_{g,\epsilon})>0, \ \forall \epsilon >0$. If $\{\Delta_i\}$ are countably many independent $\mu$-random subgroups, then almost surely one of them intersects $B(g,\epsilon)$ and so $s(O_{g,\epsilon}) = 1$. Since this holds for any $\epsilon>0$, the same converging sequence argument shows that $s(\Env g)=1, \forall g \in Q(\mu)$, and hence the same holds for every $g \in \langle Q(\mu) \rangle$. By \cref{lem:closuregenerators} we find a countable set of elements $\{g_1,g_2 \ldots \} \subset \langle Q(\mu) \rangle$, which is dense in $\langle \mu \rangle$. We get $s(\Env \langle \mu \rangle)=s(\bigcup_i \Env g_i)=1$, which proves the second claim.
\end{proof}

We now discuss the kernel and closure of combinations of probability measures on $\Sub(G)$ for later use.

\begin{lemma}\label{lem:kernelclosure}
Given two probability measures $\mu_1, \mu_2$ on $\Sub(G)$, the convex combination $\nu=\alpha\mu_1+(1-\alpha)\mu_2$ is again a probability measure on $\Sub(G)$ and \begin{align*}
\ker(\nu)&=\ker(\mu_1)\cap \ker(\mu_2)\\
\langle \nu \rangle&=\langle\langle \mu_1 \rangle,\langle \mu_2 \rangle\rangle
\end{align*}Given a family $(\mu_i)_{i\in I}\in \Prob(\Sub(G))$ we have \begin{align*}
\ker(\bigcap_{i\in I}\mu_i)&=\bigcap_{i\in I} \ker(\mu_i),\\
\llangle \mu_i \rangle_{i\in I} \rangle&=\llangle \mu_i \rrangle_{i\in I}.
\end{align*} We can only say that
\begin{align*}
\ker(\langle \mu_i \rangle_{i\in I})&\geq \langle \ker(\mu_i) \rangle_{i\in I},\\
\langle \bigcap_{i\in I}\mu_i \rangle&\leq  \bigcap_{i\in I} \langle \mu_i \rangle.
\end{align*}
\end{lemma}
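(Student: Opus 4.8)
The plan is to reduce every assertion to the three pointwise/minimality characterisations already in hand: by \cref{lem:kernel}, $g\in\ker(\mu)$ iff $\mu(\Env g)=1$; by \cref{def:closure} together with \cref{lem:closure}, $\langle\mu\rangle$ is the smallest closed subgroup $H$ with $\mu(\Sub(H))=1$; and by \cref{lem:closuregenerators}, $\langle\mu\rangle=\overline{\langle Q(\mu)\rangle}$, where $g\in Q(\mu)$ iff $\mu(O_1(B(g,\eps)))>0$ for all $\eps>0$. On top of this I would record how the maps $\cap,\langle\rangle\colon\Sub(G)^I\to\Sub(G)$ interact with the relevant Borel sets: $\cap^{-1}(\Env g)=\prod_{i}\Env g$ and $\langle\rangle^{-1}(\Sub(H))=\prod_i\Sub(H)$ are product sets, while $\cap^{-1}(\Sub(H))\supseteq\pi_j^{-1}(\Sub(H))$, $\cap^{-1}(O_1(B(g,\eps)))\subseteq\pi_j^{-1}(O_1(B(g,\eps)))$, $\langle\rangle^{-1}(\Env g)\supseteq\pi_j^{-1}(\Env g)$ and $\langle\rangle^{-1}(O_1(B(g,\eps)))\supseteq\pi_j^{-1}(O_1(B(g,\eps)))$ for every coordinate $j$ (all immediate from $\bigcap_i\Delta_i\le\Delta_j\le\langle\Delta_i:i\rangle$). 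Here $\bigcap_{i\in I}\mu_i$ and $\langle\mu_i\rangle_{i\in I}$ are understood, as before, as pushforwards of the product measure $\prod_{i\in I}\mu_i$, and I would take $I$ countable, so that these preimages lie in the product $\sigma$-algebra and countably many almost-sure statements can be intersected.

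For the convex combination (with $0<\alpha<1$, the endpoints being vacuous): $g\in\ker(\nu)$ iff $\alpha\mu_1(\Env g)+(1-\alpha)\mu_2(\Env g)=1$, and since the two summands are bounded by $\alpha$ and $1-\alpha$ this holds iff $\mu_1(\Env g)=\mu_2(\Env g)=1$, giving $\ker(\nu)=\ker(\mu_1)\cap\ker(\mu_2)$. Similarly, for a closed subgroup $H$ one has $\nu(\Sub(H))=1$ iff $\mu_1(\Sub(H))=\mu_2(\Sub(H))=1$ iff $\langle\mu_1\rangle\le H$ and $\langle\mu_2\rangle\le H$ iff $\langle\langle\mu_1\rangle,\langle\mu_2\rangle\rangle\le H$; taking the minimal such $H$ yields $\langle\nu\rangle=\langle\langle\mu_1\rangle,\langle\mu_2\rangle\rangle$.

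For the countable family I would argue as follows. Using $(\prod_i\mu_i)(\prod_iA_i)=\prod_i\mu_i(A_i)$, together with the observation that a product of numbers in $[0,1]$ equals $1$ iff every factor does: $g\in\ker(\bigcap_i\mu_i)$ iff $\prod_i\mu_i(\Env g)=1$ iff $\mu_i(\Env g)=1$ for all $i$ iff $g\in\bigcap_i\ker(\mu_i)$. For the closure of $s\coloneqq\langle\mu_i\rangle_{i\in I}$, put $L\coloneqq\overline{\langle\langle\mu_i\rangle:i\in I\rangle}$. Since each $\mu_i(\Sub(\langle\mu_i\rangle))=1$, the set $\{(\Delta_i):\Delta_i\le\langle\mu_i\rangle\text{ for all }i\}$ has full $\prod_i\mu_i$-measure, and on it $\langle\Delta_i:i\rangle\le L$; hence $s(\Sub(L))=1$ and $\langle s\rangle\le L$. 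Conversely, if $g\in Q(\mu_j)$ then for every $\eps>0$ we get $s(O_1(B(g,\eps)))\ge\mu_j(O_1(B(g,\eps)))>0$, so $g\in Q(s)\subseteq\langle s\rangle$ by \cref{lem:closuregenerators}; thus $\langle\mu_j\rangle=\overline{\langle Q(\mu_j)\rangle}\le\langle s\rangle$ for each $j$, and therefore $L\le\langle s\rangle$, which gives the claimed equality $\langle s\rangle=L$.

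Finally, the two inequalities follow from the one-sided relations above. If $g\in\ker(\mu_j)$, then $\pi_j^{-1}(\Env g)$ has full $\prod_i\mu_i$-measure and sits inside $\langle\rangle^{-1}(\Env g)$, so $s(\Env g)=1$, i.e.\ $g\in\ker(s)$; since $\ker(s)$ is a closed subgroup, $\langle\ker(\mu_i)\rangle_{i\in I}\le\ker(s)$. Dually, with $m\coloneqq\bigcap_{i\in I}\mu_i$, the full-measure set $\pi_j^{-1}(\Sub(\langle\mu_j\rangle))$ lies in $\cap^{-1}(\Sub(\langle\mu_j\rangle))$, so $m(\Sub(\langle\mu_j\rangle))=1$ and hence $\langle m\rangle\le\langle\mu_j\rangle$ by minimality; intersecting over $j$ gives $\langle\bigcap_i\mu_i\rangle\le\bigcap_i\langle\mu_i\rangle$. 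I expect no serious obstacle here: the mathematical content lies entirely in \cref{lem:kernel}, \cref{def:closure} and \cref{lem:closuregenerators}, and the only point genuinely requiring care is the setup of the infinite operations (in particular restricting to countable $I$ so that the preimages above are measurable and the almost-sure statements can be combined) and keeping track of which bracket denotes "closure of a measure" versus "closed subgroup generated by a family." The last two relations are only inequalities in general, but since the lemma does not claim strictness there is nothing further to prove.
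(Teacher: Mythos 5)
Your proof is correct and follows essentially the same route as the paper's: both reduce every claim to the characterizations of the kernel and closure from \cref{lem:kernel}, \cref{lem:closure} and \cref{lem:closuregenerators}, and then exploit that $\cap^{-1}(\Env g)$ and $\langle\rangle^{-1}(\Sub(H))$ are full product sets while the remaining preimages only contain or are contained in coordinate cylinders. The only (harmless) local differences are that you prove $\llangle \mu_i\rangle_{i}\rangle \supseteq \overline{\langle \langle\mu_i\rangle : i\rangle}$ via essential elements rather than directly from $\langle\rangle^{-1}(\Sub(H))=\prod_i\Sub(H)$, and that you make explicit the countability of $I$ needed for measurability, a point the paper leaves implicit.
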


\begin{proof}
The statements about kernel and closure of convex combinations of probability measures on $\Sub(G)$ are direct consequences of \cref{lem:kernel} and \cref{lem:closuregenerators}, respectively.

Note that $\bigcap_{i\in I}^{-1}\Env H=\prod_{i\in I}\Env H$. So for any $H\leq G$ we have $\bigcap_{i\in I}\mu_i(\Env H)=\prod_{i\in I}\mu_i(\Env H)=1$ if and only if $H\leq \ker(\mu_i)$ for all $i\in I$, that is if and only if $H\leq \bigcap_{i\in I} \ker(\mu_i)$. So $\ker(\bigcap_{i\in I}\mu_i)=\bigcap_{i\in I} \ker(\mu_i)$.

Note that $\langle \rangle_{i\in I}^{-1}\Sub H=\prod_{i\in I}\Sub H$. So for any $H\leq G$ we have $\langle \mu_i \rangle_{i\in I}(\Sub H)=\prod_{i\in I}\mu_i(\Sub H)=1$ if and only if $H\geq \langle \mu_i \rangle$ for all $i\in I$, that is if and only if $H\geq \llangle \mu_i \rrangle_{i\in I}$. So $\llangle \mu_i \rangle_{i\in I} \rangle=\llangle \mu_i \rrangle_{i\in I}$.

Note that $\Env \langle H_i \rangle_{i\in I} \supseteq \langle \Env H_i \rangle_{i\in I}$. We have \begin{align*}
\langle \mu_i \rangle_{i\in I}(\Env \langle \ker(\mu_i) \rangle_{i\in I})&\geq \langle \mu_i \rangle_{i\in I}(\langle\Env \ker(\mu_i)\rangle_{i\in I})\\
&\geq \prod_{i\in I}\mu_i(\prod_{i\in I} \Env \ker(\mu_i))=1.
\end{align*} So $\ker(\langle \mu_i \rangle_{i\in I})\geq \langle \ker(\mu_i) \rangle_{i\in I}$.

Note that $\Sub \bigcap_{i\in I} H_i = \bigcap_{i\in I} \Sub H_i$. We have \begin{align*}\bigcap_{i\in I}\mu_i(\Sub \bigcap_{i\in I}\langle \mu_i \rangle)&=\bigcap_{i\in I}\mu_i(\bigcap_{i\in I} \Sub \langle \mu_i \rangle)\\
&\geq \prod_{i\in I}\mu_i(\prod_{i\in I} \Sub \langle \mu_i \rangle)=1.\end{align*} So $\langle \bigcap_{i\in I}\mu_i \rangle\leq  \bigcap_{i\in I} \langle \mu_i \rangle$.
\end{proof}

\section{Acylindrically hyperbolic groups}\label{sec:acylin}

In this section we prove our theorems. The main ingredients are a construction of Kechris and Quorning (in \cite{Kechris_Quorning_2019}) and a theorem of Sun (in \cite{Sun_2020}). We first reproduce the needed results of Sun.

\begin{definition}[Definition 2.2 in \cite{Sun_2020}]
Let $G$ be a group with a subgroup $H$ and let $N\lhd H$. We say that the triple $(G,H,N)$ has the \emph{Cohen-Lyndon property} if there exists a left transversal $T$ of $H\llangle N\rrangle_G$ in $G$ such that $\llangle N\rrangle_G$ is the free product of its subgroups $N^t=tNt^{-1}$ for $t\in T$, denoted as \begin{equation*}
\llangle N\rrangle_G = \prod_{t\in T}^{*} N^{t}.
\end{equation*}
\end{definition}

This property was first studied by Cohen and Lyndon in \cite{LC:free_bases}. They established this property for $(G,H,N)$, where $G = F_r$ is a free group, $N = \langle n \rangle < G$ is any nontrivial cyclic subgroup and $H = E(n)$ is the maximal cyclic subgroup containing $n$. We will use an outstanding generalization of their theorem due to Sun.

\begin{theorem}[Corollary 2.7 in \cite{Sun_2020}]\label{cor:sun}
Let $G$ be an acylindrically hyperbolic group, and let $g\in G$ be a generalized loxodromic element. Then there is a unique maximal virtually cyclic subgroup $g \in E(g) < G$ containing $g$ and the triplet $(G,E(g),N)$ has the Cohen-Lyndon property for all $N\lhd E(g)$ with $N\cap F=\emptyset$ for a fixed finite set $F\subset E(g)\setminus \{1\}$.
\end{theorem}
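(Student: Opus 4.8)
The plan is to deduce this from the Dahmani--Guirardel--Osin theory of hyperbolically embedded subgroups and group-theoretic Dehn filling, via very rotating families; following \cite{Sun_2020}, the clean route is to first prove a Cohen--Lyndon theorem for an arbitrary hyperbolically embedded subgroup and then specialise to $H=E(g)$. First I would pin down $E(g)$. Since $g$ is generalized loxodromic, fix an acylindrical action of $G$ on a hyperbolic space $S$ in which $g$ is loxodromic, with fixed-point pair $\{g^{+\infty},g^{-\infty}\}\subset\partial S$, and set $E(g)\coloneqq\Stab_G\{g^{+\infty},g^{-\infty}\}$. Acylindricity forces $E(g)$ to be virtually cyclic (Osin; Dahmani--Guirardel--Osin), and it obviously contains $g$. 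If $V\leq G$ is virtually cyclic with $g\in V$, then $g$ has infinite order in $V$, so $\langle g\rangle$ has finite index in $V$; hence some $\langle g^m\rangle$ ($m\geq 1$) is normal in $V$, and every $v\in V$ conjugates $g^m$ to a power of $g$ and therefore preserves the (two-point) fixed set of $g$ on $\partial S$, so $v\in E(g)$. Thus $E(g)$ is the unique maximal virtually cyclic subgroup containing $g$. Finally, the standard machinery shows that $E(g)$ is hyperbolically embedded, $E(g)\hookrightarrow_h(G,X)$ for a suitable (possibly infinite) relative generating set $X$.

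Next I would set up the Dehn-filling geometry and choose $F$. Let $\widehat d$ be the relative metric on $E(g)$ induced by $\Cay(G,X\sqcup E(g))$; hyperbolic embeddedness means $\widehat d$ is locally finite, so every relative ball $B_{\widehat d}(1,R)\cap E(g)$ is finite. Take $F$ to be such a ball for a radius $R$ large enough — depending only on the hyperbolicity and acylindricity constants of the set-up and on the embedding parameters of $E(g)\hookrightarrow_h G$, and \emph{not} on $N$ — that the Dahmani--Guirardel--Osin Dehn-filling estimates apply. Then the hypothesis $N\cap F=\emptyset$ says precisely that $N$ is \emph{deep}, i.e.\ all of its nontrivial elements are long in $\widehat d$. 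Coning off the left $E(g)$-cosets in $\Cay(G,X\sqcup E(g))$ produces a hyperbolic space $\widehat\Gamma$ on which $G$ acts with cone points indexed by $G/E(g)$ and cone-point stabilisers the conjugates of $E(g)$. For $N$ deep, the family consisting of the cone points $c_{gE(g)}$ together with the rotation subgroups $R_{gE(g)}=gNg^{-1}\lhd gE(g)g^{-1}$ forms a very rotating family on $\widehat\Gamma$.

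Then I would invoke the structure theorem for very rotating families: the subgroup generated by all rotation subgroups, which here is $\langle gNg^{-1}:g\in G\rangle=\llangle N\rrangle_G$, is the free product of the rotation subgroups over a set of representatives of the orbits of $\llangle N\rrangle_G$ on the set of cone points. Since $\llangle N\rrangle_G$ is normal, the orbit of $gE(g)$ is the image in $G/E(g)$ of $\llangle N\rrangle_G\,g\,E(g)=g\,(E(g)\llangle N\rrangle_G)$, so these orbits are in bijection with the left cosets $G/(E(g)\llangle N\rrangle_G)$. Choosing a left transversal $T$ of $E(g)\llangle N\rrangle_G=H\llangle N\rrangle_G$ in $G$, the orbit representatives are the cone points $tE(g)$ with $t\in T$, and the attached rotation subgroups are $tNt^{-1}=N^t$. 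Hence $\llangle N\rrangle_G$ is the free product of the $N^t$, $t\in T$, which is exactly the Cohen--Lyndon property for $(G,E(g),N)$.

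The main obstacle is the middle step: verifying the very rotating family axioms — large separation between distinct cone points, and the "very rotating" condition governing how each $gNg^{-1}$ moves points near its cone point — requires the small-cancellation/windmill estimates of Dahmani--Guirardel--Osin and a careful tracking of how $R$ (hence $F$) must depend on the hyperbolicity constant of $S$, the acylindricity function, and the embedding data of $E(g)\hookrightarrow_h G$. Everything else — the existence and uniqueness of $E(g)$ and the double-coset bookkeeping identifying orbits of apices with cosets of $H\llangle N\rrangle_G$ — is routine.
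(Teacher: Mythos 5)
This statement is not proved in the paper at all: it is quoted verbatim as Corollary~2.7 of Sun's paper and used as a black box, so there is no internal proof to compare against. Your sketch is therefore best measured against Sun's actual argument, and on that score it reconstructs the right strategy: $E(g)$ as the stabiliser of the endpoint pair of a loxodromic axis (your uniqueness argument via the normal core $\langle g^m\rangle$ is the standard one and is correct), hyperbolic embeddedness of $E(g)$, the choice of $F$ as a large ball in the relative metric so that $N\cap F=\emptyset$ means $N$ is deep, and the Dahmani--Guirardel--Osin Dehn-filling/rotating-family machinery to extract the free product decomposition of $\llangle N\rrangle_G$.

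The one place where your sketch misjudges the difficulty is the final step, which you describe as ``routine double-coset bookkeeping.'' What the Dahmani--Guirardel--Osin Dehn filling theorem (and the windmill/very-rotating-family structure theorem) gives off the shelf is that $\llangle N\rrangle_G$ is a free product of \emph{some} collection of conjugates $N^{t}$; it does not by itself certify that the index set can be taken to be a full left transversal $T$ of $E(g)\llangle N\rrangle_G$ in $G$, with exactly one free factor per coset. Upgrading ``some collection of conjugates'' to the precise Cohen--Lyndon form is the actual content of Sun's theorem beyond DGO, and it requires re-running the windmill induction while tracking which apex orbits contribute which conjugates (your identification of the $\llangle N\rrangle_G$-orbits on $G/E(g)$ with $G/(E(g)\llangle N\rrangle_G)$ is correct and is an ingredient, but it is not the whole argument). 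So as a self-contained proof the proposal has a gap at exactly the point it declares easy; as a guide to where the result comes from and why it is believable, it is accurate. Since the paper itself treats the statement as an external citation, none of this affects the correctness of the paper.
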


Note that we can always find such $N\lhd E(g)$: since $E(g)$ is virtually cyclic, for every finite subset $F \subset E(g)$ there is a normal finite index subgroup $N \lhd E(g)$ with $N \cap F=\emptyset$.

We refer the reader to \cite{Osin_2016} for the definition of acylindrically hyperbolic groups and  generalized loxodromic elements. Here instead let us mention a few outstanding examples for the above situation:
\begin{itemize}
\item $G$ is a hyperbolic group and $g \in G$ is a loxodromic element for the action on the Cayley graph of $G$. 
\item $G = \Mod(\Sigma)$ is the mapping class group of a closed orientable surface of genus at least 2 and $g \in G$ is pseudo-Anosov, \cite{Bowditch_tight,PS:acyl_mcg}.
\item $G = \Out(F_n)$ and $g \in G$ is fully irreducible, \cite{BF_hyp_out}. 
\item $G = \Aut(F_n)$ or more generally $G = \Aut(\Gamma)$ where $\Gamma$ is a non-elementary hyperbolic group, \cite{Genevois_2019,Genevois_Horbez_2021}. These examples will be especially important for us later and we will discuss them in more detail. 
\end{itemize}


We now adapt the construction made by Kechris and Quorning in the proof of Proposition 8.1 in \cite{Kechris_Quorning_2019}. Let $(G,H,N)$ have the Cohen-Lyndon property. Set $F = \llangle N \rrangle_G$. Assume that $[G:HF]=\infty$ and that ${[HF:F]=m<\infty}$. We enumerate the transversal $T=\{t_1,t_2,\dots \}$ for the left cosets in $G/HF$ provided by the Cohen-Lyndon property of $(G,H,N)$. Define $\Gamma_k\coloneqq \llangle t_iN t_i^{-1}\mid i\geq k\rrangle_{F}$ and $\mu_F\coloneqq \sum_{k=1}^\infty 2^{-k} \delta_{\Gamma_k}$. Let $S=\{s_1,s_2, \dots, s_m\}\subset H$ be a transversal for the left cosets in $HF/F$ and define ${\mu_{HF}\coloneqq \frac{1}{m}\sum_{j=1}^m {s_j}_*\mu_F}$. Finally define ${\mu_G\coloneqq \bigcap_{i\geq 1} {t_i}_*\mu_{HF}}$.

\newlength{\lena}
\settowidth{\lena}{$F=\llangle N \rrangle_G= \Gamma_1$}

\begin{center}
\begin{tikzcd}
& G \arrow[d,dash,"{T=\{t_1,t_2,\ldots\}}"] & \\
& HF \arrow[dl,dash] \arrow[dr,dash,"\lhd" near start, "{S=\{s_1,\ldots,s_m\}}" near end] & \\
\makebox[\lena][c]{H} \arrow[dr,dash,"\lhd"] & & F=\llangle N \rrangle_G= \Gamma_1 \arrow[dl,dash] \arrow[d,dash] \\
& N \arrow[ddr,dash] & \Gamma_2 \arrow[d,dash] \\
& & \vdots \arrow[d,dash] \\
&   & \bigcap_{k} \Gamma_k = \trivgp
\end{tikzcd}
\end{center}

\begin{proposition}\label{prop:CLprop}
With all of the above notation, $\mu_G$ is a weakly mixing faithful IRS on $G$ and $\langle \mu_G \rangle = F$.
\end{proposition}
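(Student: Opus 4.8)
The plan is to construct $\mu_G$ through the three stages $\mu_F,\mu_{HF},\mu_G$ indicated above, determining at each stage the kernel and the normal closure with the help of \cref{lem:kernelclosure} and \cref{lem:closuregenerators}, and then to read off weak mixing of $\mu_G$ directly from \cref{prop:wm}. The two preliminary stages and the application of \cref{prop:wm} are essentially bookkeeping; the real content is the computation of $\langle\mu_G\rangle$, and in particular the lower bound $\langle\mu_G\rangle\supseteq F$.

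For the preliminary stages: each $\Gamma_k$ is normal in $F$ (it is a normal closure in $F$), so $\mu_F=\sum_k 2^{-k}\delta_{\Gamma_k}$ is an $F$-invariant probability measure on $\Sub(F)$; since the $\Gamma_k$ are nested with $\Gamma_1=F$ and $\bigcap_k\Gamma_k=\trivgp$ (from the free product decomposition furnished by the Cohen--Lyndon property), \cref{lem:kernelclosure} and \cref{lem:closuregenerators} give $\ker(\mu_F)=\trivgp$ and $\langle\mu_F\rangle=F$. As $F\lhd HF$ and $S\subset H$ is a transversal of $HF/F$, one checks that $\mu_{HF}=\frac1m\sum_j(s_j)_*\mu_F$ is $HF$-invariant ($F$-invariance is inherited from $\mu_F$, and left translation by $H$ permutes the cosets $s_jF$); then $\ker(\mu_{HF})=\bigcap_j s_j\ker(\mu_F)s_j^{-1}=\trivgp$ and $\langle\mu_{HF}\rangle=\langle s_jFs_j^{-1}\mid j\rangle=F$ because $F\lhd HF$. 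Since $\gamma_*\mu_{HF}$ depends only on the coset $\gamma HF$, we have $\mu_G=\bigcap_{i}(t_i)_*\mu_{HF}=\bigcap_{\gamma\in G/HF}\gamma_*\mu_{HF}$, so \cref{prop:wm} (legitimate because $[G:HF]=\infty$) shows $\mu_G$ is a weakly mixing IRS on $G$; and \cref{lem:kernelclosure} gives $\ker(\mu_G)=\bigcap_i t_i\ker(\mu_{HF})t_i^{-1}=\trivgp$ together with $\langle\mu_G\rangle\le\bigcap_i t_i\langle\mu_{HF}\rangle t_i^{-1}=\bigcap_i t_iFt_i^{-1}=F$, the last equality because $F\lhd G$.

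It remains to prove $\langle\mu_G\rangle\supseteq F$. Since $G$ is discrete, \cref{lem:closuregenerators} gives $\langle\mu_G\rangle=\langle Q(\mu_G)\rangle$, and $Q(\mu_G)$ is invariant under conjugation by $G$ (because $\mu_G$ is $G$-invariant), so $\langle\mu_G\rangle$ is normal in $G$; as $F=\llangle N\rrangle_G$, it therefore suffices to show $N\subseteq Q(\mu_G)$, i.e.\ $\mu_G(\Env n)>0$ for every $n\in N\setminus\{e\}$. Because $\mu_{HF}$ is supported on $\Sub(F)$ and $F\lhd G$, unwinding $\mu_G=\bigcap_i(t_i)_*\mu_{HF}$ and $\mu_F=\sum_k 2^{-k}\delta_{\Gamma_k}$ gives, for $g\in F\setminus\{e\}$,
\[\mu_G(\Env g)=\prod_{i\ge1}\Bigl(1-\frac1m\sum_{j=1}^{m}2^{-\ell(s_j^{-1}t_i^{-1}gt_is_j)}\Bigr),\qquad \ell(y):=\max\{k\ge1\mid y\in\Gamma_k\},\]
with $\ell(y)$ finite for $y\neq e$ since $\bigcap_k\Gamma_k=\trivgp$; hence $\mu_G(\Env n)>0$ if and only if $\sum_{i,j}2^{-\ell(s_j^{-1}t_i^{-1}nt_is_j)}<\infty$. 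The crucial estimate is a lower bound on $\ell$. In $F=\prod_{t\in T}^{*}N^{t}$ the element $s_j^{-1}t_i^{-1}nt_is_j$ is conjugate \emph{within $F$} to a nontrivial element of the free factor $N^{\tau(i,j)}$, where $\tau(i,j)\in T$ represents the coset $(t_is_j)^{-1}HF$; since $\Gamma_k\lhd F$ contains $N^{t_l}$ for every index $l\ge k$, it is closed under $F$-conjugation, so $\ell(s_j^{-1}t_i^{-1}nt_is_j)$ is at least the index of $\tau(i,j)$. Finally, for each fixed $j$ the fibres of $i\mapsto\tau(i,j)$ are uniformly bounded: each right coset of $HF$ in $G$ contains at most $m$ elements of $T$ (pass to $G/F$, in which $HF/F$ has order $m$), so re-indexing the right cosets of $HF$ by the right transversal $\{t_l^{-1}\}$ yields $\sum_i 2^{-\operatorname{ind}\tau(i,j)}\le m\sum_{l\ge1}2^{-l}=m$, and hence $\sum_{i,j}2^{-\ell(\cdots)}\le m^2<\infty$. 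This gives $\mu_G(\Env n)>0$, so $N\subseteq Q(\mu_G)$ and $\langle\mu_G\rangle=F$.

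The main obstacle is precisely the inclusion $\langle\mu_G\rangle\supseteq F$: it forces one to control two independent combinatorial structures at once --- the free-product/reduced-word structure of $F=\prod_{t\in T}^{*}N^{t}$, which locates a conjugate of a free-factor element among the normal subgroups $\Gamma_k$, and the distribution of the Cohen--Lyndon transversal $T$ among the right cosets of $HF$ in $G$ --- and the convergence of the resulting series rests essentially on the finiteness hypothesis $[HF:F]=m<\infty$.
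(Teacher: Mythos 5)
Your proof is correct, and its skeleton (induce $\mu_F \to \mu_{HF}$, co--induce to $\mu_G$, get weak mixing from \cref{prop:wm}, compute kernels and closures via \cref{lem:kernelclosure}) coincides with the paper's; the genuine divergence is in how you prove $N \subseteq Q(\mu_G)$. The paper evaluates the pushforwards with the convention $(t_i)_*(s_j)_*\mu_F(\Env n)=\mu_F(\Env\, t_i s_j n s_j^{-1}t_i^{-1})$, and then the construction pays off in one line: $s_j n s_j^{-1}\in N$ because $N\lhd H$ and $s_j\in H$, so $t_i(s_j n s_j^{-1})t_i^{-1}\in t_iNt_i^{-1}\subseteq \Gamma_k$ for every $k\le i$ directly from the definition of $\Gamma_k$, whence each factor is at least $1-2^{-i}$ and $\mu_G(\Env n)\ge \prod_i(1-2^{-i})>0$, with no appeal to the free-product normal form or to how $T$ distributes among right cosets. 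Your convention $g_*\mu(\Env n)=\mu(\Env g^{-1}ng)$ produces instead the element $s_j^{-1}t_i^{-1}n t_i s_j$, whose depth in the filtration $(\Gamma_k)$ is governed by the coset $(t_is_j)^{-1}HF$ rather than by $i$; this is what forces your two extra ingredients --- identifying the $F$-conjugacy class of this element inside the free factor $N^{\tau(i,j)}$, and the bound (from $[HF:F]=m$, working in $G/F$) that each right coset of $HF$ contains at most $m$ elements of the left transversal $T$, giving $\sum_{i,j}2^{-\ell(\cdot)}\le m^2$. Both conventions define a legitimate $G$-invariant measure, your counting is correct, and the extra positivity check ($\ell\ge 1$ since $\Gamma_1=F$, so each factor is at least $\tfrac12$) goes through; so this is a sound, if more laborious, alternative route. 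One small omission: you assert $\bigcap_k\Gamma_k=\trivgp$ as immediate from the free-product decomposition, whereas the paper justifies it with the retractions $\phi_k:F\to F$ killing the factors $t_iNt_i^{-1}$ for $i\ge k$, so that $\Gamma_k=\ker(\phi_k)$ and every nontrivial word survives some $\phi_l$; since faithfulness rests entirely on this fact, you should include that sentence.
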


\begin{proof}
By definition $\mu_F$ is an IRS on $F$, which is induced to an IRS $\mu_{HF}$ on $HF$ via the finite transversal $S$. Now, $\mu_{G}$ is obtained by co-inducing from $HF$ to $G$ and since ${[G:HF]=\infty}$, \cref{prop:wm} shows that $\mu_G$ is a weakly mixing IRS on $G$.

We will now show that $\langle \mu_G \rangle=F$. In particular, this shows that $\mu_G$ is nontrivial. For any $n\in N$, we have \begin{align*}
\mu_G(\Env n)&=\left(\bigcap_{i\geq 1} {t_i}_*\mu_{HF}\right)(\Env n)=
\prod_{i \ge 1} {t_i}_{*} \mu_{HF} \left(\Env n \right) \\
&=\prod_{i\geq 1}(\frac{1}{m}\sum_{j=1}^m\mu_F(\Env {t_i}{s_j}n{s_j}^{-1}{t_i}^{-1}))\\
\intertext{If $i\geq k$, then ${t_i}n{t_i}^{-1}\in \Gamma_k$ for all $n\in N$. Since $N\lhd H$ and $s_i \in H$, this implies $\delta_{\Gamma_k}(\Env {t_i}{s_j}n{s_j}^{-1}{t_i}^{-1})=1$ for $i\geq k$. So}
\mu_{G}(\Env n)&\geq\prod_{i\geq 1}(\frac{1}{m}\sum_{j=1}^m\sum_{k=1}^i 2^{-k}) \geq\prod_{i\geq 1}(1-2^{-i})>0.
\end{align*} We have seen that $N\subset \llangle\{g\in G\mid \mu_G(\Env g)>0\}\rrangle_G=\langle \mu_G\rangle$. Since $\langle \mu_G\rangle$ is a normal subgroup of $G$, this shows $F = \llangle N \rrangle_{G} \subset\langle \mu_G\rangle$. On the other hand, using \cref{lem:kernelclosure} we can see that \begin{align*}\langle \mu_G\rangle\leq  \bigcap_{i\geq 1} \langle {t_i}_*\mu_{HF}\rangle&=\bigcap_{i\geq 1} t_i \left\langle \langle {s_1}_*\mu_F\rangle, \dots, \langle {s_m}_* \mu_F\rangle \right \rangle t_i^{-1} \\
&=\bigcap_{i\geq 1} t_i \langle s_1Fs_1^{-1}, \dots, s_mFs_m^{-1} \rangle t_1^{-1} \leq F.\end{align*}

Now we show that $\mu_G$ is faithful. Using \cref{lem:kernelclosure} we get that $\ker(\mu_G)=\bigcap_{i\geq 1} {t_i}\ker(\mu_{HF}){t_i}^{-1}$ and $\ker(\mu_{HF})=\bigcap_{j=1}^m {s_j}\ker(\mu_F){s_j}^{-1}$. 
So it is enough to show that $\ker(\mu_{F})=\bigcap_{k=1}^\infty \Gamma_k$ is trivial. Remember that \[\Gamma_k=\llangle t_iN t_i^{-1}\mid i\geq k\rrangle_{F}<F=\prod_{i\geq 1} t_iNt_i^{-1}.\] Define $\phi_k:F\to F$ by \[\phi_k(t_in t_i^{-1})=\begin{cases}t_in t_i^{-1},& \text{if } i<k\\
e,& \text{if } i\geq k\\
\end{cases}\] for any $n\in N$. Note that $\Gamma_k=\ker(\phi_k)$. On the other hand, for any nontrivial element $\omega\neq e \in F$, there is $l\in \N$ such that $\phi_l(\omega)=\omega\neq e$. So $\bigcap_{k=1}^\infty \Gamma_k=\langle e\rangle$.
\end{proof}

\acylin

\begin{proof}
Let $G$ be an acylindrically hyperbolic group, and let $g\in G$ be a generalized loxodromic element. If $G$ is virtually free, with a normal finite index free subgroup $F \lhd G$, we further require that $g$ be chosen as a loxodromic element inside the commutator $F'$. By \cref{cor:sun}, $(G,E(g),N)$ has the Cohen-Lyndon property for some finite index cyclic subgroup $N\lhd E(g)$. Since $[E(g):N]<\infty$, we also have $[E(g)\llangle N\rrangle_G:\llangle N\rrangle_G]<\infty$. If $G$ is virtually free, then our choice of $g$ ensures that $\llangle N \rrangle_G < F'$ which is of infinite index in $G$. If $G$ is not virtually free, $\llangle N \rrangle_G$, which is a free group due to the Cohen-Lyndon property, again must be of infinite index. In any case ${[G:E(g)\llangle N\rrangle_G]=\infty}$, so we can apply \cref{prop:CLprop} and conclude that $G$ admits a weakly mixing nontrivial faithful IRS.
\end{proof}

\begin{remark}
We can find a continuum of ergodic nontrivial faithful IRSs in every acylindrically hyperbolic group $G$, adapting the construction of $\mu_G$ exactly as in the proof of Proposition 8.1 in \cite{Kechris_Quorning_2019}. Let $N\in \N$ be minimal, such that $\Gamma_{N+1}\subsetneq \Gamma_N$ and let $\lambda=\sum_{k=1}^{N+1} 2^{-k}$. For $a\in (0,\lambda)$, define \[\mu_{a,F}=a\delta_{\Gamma_1}+(\lambda-a)\delta_{\Gamma_{N+1}}+\sum_{k=N+2}^\infty 2^{-k} \delta_{\Gamma_k}.\] Then $\mu_{a,F}$ is an IRS on $F$, which can be induced to $HF$ and then co-induced to an ergodic nontrivial faithful IRS $\mu_{a,G}$ on $G$ as above. Choose $n\in N$ such that $A(n)\coloneqq \{i\in \N\mid t_int_i\notin \Gamma_{N+1}\}$ is not empty. Then \[
\mu_{a,G}(\Env n)=a^{\lvert A(n)\rvert} \prod_{i\notin A(n)}(\frac{1}{m}\sum_{j=1}^m\mu_{a,F}(\Env {t_i}{s_j}n{s_j}^{-1}{t_i}^{-1})),\] so $\mu_{a,G}$ does depend on the choice of $a$.
\end{remark}

\charirs
\begin{proof}
The results of Genevois (in \cite{Genevois_2019}) and Genevois-Horbez (in \cite{Genevois_Horbez_2021}) together show that $\Aut(G)$ is acylindrically hyperbolic if $G$ is a non-elementary hyperbolic group. They have also shown that there is an inner automorphism $c_g\in \Inn(G)\lhd \Aut(G)$ that is a generalized loxodromic element. So we get not only a weakly mixing nontrivial faithful IRS on $\Aut(G)$ by \cref{prop:CLprop}, but this IRS is contained in $\Inn(G)$. Therefore we can see it as a characteristic random subgroup of $G$.
\end{proof}

\begin{remark}\label{rem:induction}
Using induction, we can conclude that every group containing a non-elementary hyperbolic group as a normal subgroup, also admits an weakly mixing nontrivial faithful IRS. If $G$ admits a finitely supported IRS $\mu$ such that ${P_\mu(H\cong\Gamma)=1}$ for some non-elementary hyperbolic group $\Gamma$, we can still conclude that $G$ admits an ergodic nontrivial faithful IRS.
\end{remark}

\section{Some open questions}

We answered the weak part of \cref{q:1} for acylindrically hyperbolic groups, namely we provided weakly mixing nontrivial IRSs with trivial kernel. But the IRSs we constructed are not spanning. In fact, in our construction the closure of the IRS is always a countably generated free group
\[F_\infty \cong \langle \mu \rangle = \prod^{*} t_i N t_i^{-1}, \qquad N \cong \Z,\]
which is bound to be of infinite index inside the ambient acylindrically hyperbolic group. So we remain with the following harder version of our original question: 
\begin{question}\label{q:2}
Which (countable) groups admit ergodic faithful spanning IRSs?
\end{question}

In \cref{thm:charirs} we construct a weakly mixing nontrivial faithful characteristic random subgroup inside any non-elementary hyperbolic group. A very natural group that is left out here is the group $F_\infty$, which is not hyperbolic and not even acylindrically hyperbolic. The main result of \cite{Bowen_Grigorchuk_Kravchenko_2017} does construct a weakly mixing nontrivial characteristic random subgroup in $F_\infty$, which is far from being faithful. Thus we are left with the question:
\begin{question} \label{q:3}
Does the group $F_\infty$ admit an ergodic nontrivial faithful characteristic random subgroup? How about the fundamental group $\pi_1(\Sigma_{\infty})$ of a surface of infinite genus? 

\end{question}
\bibliography{LitFreevsFaithful}{}
\bibliographystyle{abbrv}

\bigskip

\noindent Both authors were partially funded by Israel Science Foundation grant ISF 2919/19.
\end{document}